\title{Algebraic curves admitting non-collinear Galois points} 
\author{Satoru Fukasawa}
\subjclass[2010]{14H50, 14H05, 14H37}
\keywords{Galois point, plane curve, Galois group, automorphism group}
\address{Department of Mathematical Sciences, Faculty of Science, Yamagata University, Kojirakawa-machi 1-4-12, Yamagata 990-8560, Japan} 
\email{s.fukasawa@sci.kj.yamagata-u.ac.jp}
\thanks{The author was partially supported by JSPS KAKENHI Grant Number 19K03438.}
\newtheorem{theorem}{Theorem}
\newtheorem{fact}{Fact}
\theoremstyle{definition}
\begin{document}
\begin{abstract} 
A criterion for the existence of a birational embedding into a projective plane with non-collinear Galois points for algebraic curves is presented. 
A new example of a plane curve with non-collinear Galois points as an application is described. 
Furthermore, a new characterization of the Fermat curve in terms of non-collinear Galois points is presented. 
\end{abstract}

\maketitle 

\section{Introduction} 
Let $X$ be a (reduced, irreducible) smooth projective curve over an algebraically closed field $k$ of characteristic $p \ge 0$ and let $k(X)$ be its function field. 
We consider a morphism $\varphi: X \rightarrow \mathbb{P}^2$, which is birational onto its image. 
A point $P \in \mathbb{P}^2$ is called a {\it Galois point}, if the field extension $k(\varphi(X))/\pi_P^*k(\mathbb{P}^1)$ of function fields induced by the projection $\pi_P$ from $P$ is a Galois extension. 
This notion was introduced by Hisao Yoshihara in 1996, to investigate the function fields of algebraic curves (\cite{miura-yoshihara, yoshihara}). 
The associated Galois group is denoted by $G_P$, when $P$ is a Galois point. 
Furthermore, a Galois point $P$ is said to be inner (resp. outer), if $P \in \varphi(X) \setminus {\rm Sing}(\varphi(X))$ (resp. if $P \in \mathbb{P}^2 \setminus \varphi(X)$). 
It is a natural and interesting problem to determine the number of Galois points for any curve $X$ and any birational embedding $\varphi: X \rightarrow \mathbb{P}^2$. 
However, this problem is difficult in general. 

Until recent years, it was not easy to construct a pair $(X, \varphi)$ such that $\varphi(X)$ admits two Galois points.  
In 2016, a criterion for the existence of a birational embedding with two Galois points was described by the present author (\cite{fukasawa1}), and by this criterion, a lot of new examples of plane curves with two Galois points were obtained (\cite{fukasawa1, open}). 
We recall this criterion. 

\begin{fact} \label{criterion} 
Let $G_1$ and $G_2$ be finite subgroups of ${\rm Aut}(X)$ and let $P_1$ and $P_2$ be different points of $X$.
Then, three conditions
\begin{itemize}
\item[(a)] $X/{G_i} \cong \Bbb P^1$ for $i=1, 2$,    
\item[(b)] $G_1 \cap G_2=\{1\}$, and
\item[(c)] $P_1+\sum_{\sigma \in G_1} \sigma (P_2)=P_2+\sum_{\tau \in G_2} \tau (P_1)$ in ${\rm Div}(X)$ 
\end{itemize}
are satisfied, if and only if there exists a birational embedding $\varphi: X \rightarrow \mathbb P^2$ of degree $|G_1|+1$ such that $\varphi(P_1)$ and $\varphi(P_2)$ are different inner Galois points for $\varphi(X)$ and the associated Galois group $G_{\varphi(P_i)}$ coincides with $G_i$ for $i=1, 2$. 
\end{fact}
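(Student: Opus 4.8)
The plan is to establish the two implications of the equivalence separately. The necessity of (a)--(c) (assuming the embedding $\varphi$ exists) is essentially a computation with the two projections, whereas the real work lies in the sufficiency, where $\varphi$ must be built from the group-theoretic and divisor-theoretic data.

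For necessity, assume $\varphi$ exists and choose coordinates on $\mathbb{P}^2$ so that $\varphi(P_1)=(1:0:0)$ and $\varphi(P_2)=(0:1:0)$. Writing $\pi_i := \pi_{\varphi(P_i)}\circ\varphi : X \to \mathbb{P}^1$, the hypothesis that $\varphi(P_i)$ is an inner Galois point with group $G_i$ means precisely that $\pi_i$ is the quotient morphism $X \to X/G_i$; identifying its target with $\mathbb{P}^1$ gives (a). Moreover $\pi_1$ and $\pi_2$ are the maps $(x:y:z)\mapsto(y:z)$ and $(x:y:z)\mapsto(x:z)$, so $k(X)^{G_1}=k(y/z)$ and $k(X)^{G_2}=k(x/z)$; since $\varphi$ is birational, $k(x/z,y/z)=k(X)$, and any element of $G_1\cap G_2$ fixes this compositum, proving (b). For (c) I would compute the intersection divisor of the line $\ell:=\overline{\varphi(P_1)\varphi(P_2)}=\{z=0\}$ in two ways. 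Because $\varphi(P_1)$ is an inner Galois point, its tangent line meets $\varphi(X)$ only at that point, so the line $\ell$, which passes through the distinct point $\varphi(P_2)$, is transverse at $\varphi(P_1)$; hence $P_1$ occurs in $\varphi^*\ell$ with coefficient $1$ and the remaining part is the fiber $\pi_1^{-1}(\overline{P_2})=\sum_{\sigma\in G_1}\sigma(P_2)$. Thus $\varphi^*\ell = P_1+\sum_{\sigma\in G_1}\sigma(P_2)$, and by the symmetric computation at $\varphi(P_2)$ also $\varphi^*\ell = P_2+\sum_{\tau\in G_2}\tau(P_1)$; equating the two gives (c).

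For sufficiency, assume (a)--(c) and put $D := P_1+\sum_{\sigma\in G_1}\sigma(P_2)=P_2+\sum_{\tau\in G_2}\tau(P_1)$, so that comparing degrees forces $|G_1|=|G_2|$ and $\deg D=|G_1|+1$. Using (a), the divisor $\sum_{\sigma\in G_1}\sigma(P_2)$ is the pullback to $X$ of the point $\overline{P_2}\in X/G_1\cong\mathbb{P}^1$, so there is a function $v\in k(X)^{G_1}$ with $k(v)=k(X)^{G_1}$ and $(v)_\infty=\sum_{\sigma\in G_1}\sigma(P_2)=D-P_1$; symmetrically choose $u\in k(X)^{G_2}$ with $k(u)=k(X)^{G_2}$ and $(u)_\infty=\sum_{\tau\in G_2}\tau(P_1)=D-P_2$. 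Define $\varphi:=(u:v:1)$, which is a morphism because $X$ is a smooth projective curve. Birationality follows from (b): $k(u,v)$ is the compositum $k(X)^{G_1}\cdot k(X)^{G_2}=k(X)^{G_1\cap G_2}=k(X)$. The degree is computed from $\varphi^*\{z=0\}=\sum_R\max\{\,-\mathrm{ord}_R u,\,-\mathrm{ord}_R v,\,0\,\}\,R$; substituting the two pole divisors and comparing coefficient by coefficient at the points of $\mathrm{supp}(D)$ (including $P_1$ and $P_2$) shows this pullback equals $D$, so $\deg\varphi(X)=|G_1|+1$. Finally, at $P_1$ the pole order of $u$ exceeds that of $v$ by exactly one --- this is the content of the extra term $P_1$ in (c) --- forcing $\varphi(P_1)=(1:0:0)$, and symmetrically $\varphi(P_2)=(0:1:0)$. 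Since the projection from $(1:0:0)$ is $(x:y:z)\mapsto(y:z)$ with function field $k(v)=k(X)^{G_1}$ of index $|G_1|$, and $\deg\varphi(X)-|G_1|=1$, the point $(1:0:0)$ has multiplicity one on $\varphi(X)$, i.e.\ it is an inner Galois point with group $G_1$; likewise $(0:1:0)$ is an inner Galois point with group $G_2$.

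I expect the main obstacle to be the coefficient bookkeeping in the sufficiency argument: one must verify that the prescribed pole divisors of $u$ and $v$ are realized exactly and that they combine at the two base points to give $\varphi^*\{z=0\}=D$ of degree $|G_1|+1$, with the images landing at the coordinate vertices as smooth points. This is exactly where condition (c) is indispensable. Conditions (a) and (b) only provide the two quotient functions and the birationality of $(u:v:1)$; it is the asymmetric terms $P_1$ and $P_2$ in (c) that produce the one-unit discrepancy in pole orders pinning $\varphi(P_1),\varphi(P_2)$ to $(1:0:0),(0:1:0)$ and that make the total intersection with $\{z=0\}$ have the correct degree. Thus the heart of the proof is to show that (c) is precisely the compatibility condition needed to glue the two covers $X\to X/G_i$ into a single plane model of degree $|G_1|+1$ carrying both points as inner Galois points.
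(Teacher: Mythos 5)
Your sufficiency argument (conditions (a)--(c) imply the existence of $\varphi$) is correct and is essentially the paper's own construction: Fact \ref{criterion} is not reproved in this paper (it is quoted from \cite{fukasawa1}), but the same mechanism appears in the proof of Theorem \ref{main} --- generators of $k(X)^{G_1}$ and $k(X)^{G_2}$ whose pole divisors are the fibers through the other point, the map $(f:g:1)$ into $\mathbb{P}^2$, birationality from condition (b) via the compositum of fixed fields, and $\varphi^*\{z=0\}=D$ giving degree $|G_1|+1$. Your bookkeeping at $(1:0:0)$ and $(0:1:0)$, including the multiplicity-one argument identifying them as smooth points of the image, is sound.

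The necessity direction, however, contains a genuine error. You assert that, because $\varphi(P_1)$ is an inner Galois point, its tangent line meets $\varphi(X)$ only at that point, and you use this to conclude that $\ell=\overline{\varphi(P_1)\varphi(P_2)}$ is transverse at $\varphi(P_1)$. That assertion is a theorem for \emph{smooth} plane curves of degree at least $4$ (where automorphisms are linear, so $G_{P_1}$ must fix $P_1$ and the tangent at $P_1$ is totally ramified), but it is false in the generality of Fact \ref{criterion}. Simplest counterexample: let $C$ be a smooth cubic in characteristic $\ne 2$, $P_1$ a non-flex point, and $P_2$ the second intersection of the tangent $T_{P_1}C$ with $C$. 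Every point of $C$ is an inner Galois point (the degree-two projections are separable, hence Galois), so this configuration lies squarely within the scope of the necessity direction; yet $\ell=T_{P_1}C$, so $P_1$ appears in $\varphi^*\ell$ with coefficient $2$, not $1$, and your decomposition ``$P_1$ plus a fiber avoiding $P_1$'' collapses. Equivalently, $G_{P_1}$ need not fix $P_1$: here the Galois involution sends $P_1$ to $P_2$. The identity you actually need, $\varphi^*\ell=P_1+\sum_{\sigma\in G_1}\sigma(P_2)$, is nevertheless true and should be proved with no tangency hypothesis at all: since $\varphi(P_1)$ is a point of multiplicity one on the image, the pencil of lines through $\varphi(P_1)$ cuts out on $X$ a linear system whose fixed divisor is exactly $1\cdot P_1$, so $\varphi^*\ell'=P_1+\pi_1^*[\ell']$ for \emph{every} line $\ell'$ through $\varphi(P_1)$; and since $\pi_1$ is the Galois quotient by $G_1$, the fiber through $P_2$ equals $\sum_{\sigma\in G_1}\sigma(P_2)$, each orbit point being weighted by its stabilizer order, which is its ramification index. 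In the cubic example this fiber contains $P_1$ itself, which restores the missing intersection multiplicity. With this local repair (the rest of your necessity argument for (a) and (b) is fine), the proposal becomes a complete proof along the same lines as the original one.
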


It is a natural problem to find a condition for the existence of {\it non-collinear} Galois points (see also \cite{open}). 
This problem is solved as follows.

\begin{theorem} \label{main} 
Let $G_1$, $G_2$ and $G_3 \subset {\rm Aut}(X)$ be finite subgroups, and let $P_1$, $P_2$ and $P_3$ be different points of $X$. 
Then, four conditions
\begin{itemize}
\item[(a)] $X/{G_i} \cong \Bbb P^1$ for $i=1, 2, 3$,    
\item[(b)] $G_i \cap G_j=\{1\}$ for any $i, j$ with $i \ne j$,  
\item[(c)] $P_i+\sum_{\sigma \in G_i} \sigma (P_j)=P_j+\sum_{\tau \in G_j} \tau (P_i)$ for any $i, j$ with $i \ne j$, and
\item[(d)] $G_i P_j \ne G_i P_k$ for any $i, j, k$ with $\{i, j, k\}=\{1, 2,3\}$ 
\end{itemize}
are satisfied, if and only if there exists a birational embedding $\varphi: X \rightarrow \mathbb P^2$ of degree $|G_1|+1$ such that $\varphi(P_1)$, $\varphi(P_2)$ and $\varphi(P_3)$ are non-collinear inner Galois points for $\varphi(X)$ and $G_{\varphi(P_i)}=G_i$ for $i=1, 2, 3$. 
\end{theorem}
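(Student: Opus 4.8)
The plan is to make the birational embedding completely explicit as $\varphi = (\phi_2 : \phi_1 : 1)$, where each $\phi_i \in k(X)$ generates the fixed field $k(X)^{G_i} = k(X/G_i) \cong k(\mathbb{P}^1)$, chosen so that the three centers land at the coordinate vertices $(1:0:0)$, $(0:1:0)$, $(0:0:1)$, which are manifestly non-collinear. Projection from these vertices recovers the pencils $(\phi_1:1)$, $(\phi_2:1)$ and $(\phi_2:\phi_1)$, so the Galois group of the $i$-th projection will be exactly $G_i$ once I know that $\phi_i$ (for $i=1,2$) and $\phi_2/\phi_1$ (for $i=3$) are the quotient maps. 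Taking degrees in (c) already forces $|G_1| = |G_2| = |G_3| =: n$, so the target degree $|G_1|+1 = n+1$ is the only candidate.

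For the ``if'' direction, conditions (a) and (d) let me build $\phi_1$ and $\phi_2$: since $X/G_1 \cong \mathbb{P}^1$ all fibers are linearly equivalent, so there is a $G_1$-invariant $\phi_1$ with $\mathrm{div}(\phi_1) = \sum_{\sigma \in G_1}\sigma(P_3) - \sum_{\sigma \in G_1}\sigma(P_2)$, non-constant of degree $n$ precisely because $G_1P_2 \neq G_1P_3$; similarly I define $\phi_2$ with $\mathrm{div}(\phi_2) = \sum_{\sigma \in G_2}\sigma(P_3) - \sum_{\sigma \in G_2}\sigma(P_1)$. I set $\varphi = (\phi_2:\phi_1:1)$ and $\phi_3 := \phi_2/\phi_1$. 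Feeding the three instances of (c) into a direct divisor computation collapses $\mathrm{div}(\phi_2/\phi_1)$ to $\sum_{\sigma \in G_3}\sigma(P_2) - \sum_{\sigma \in G_3}\sigma(P_1)$, which is $G_3$-invariant; comparing $\phi_2/\phi_1$ with the genuine $G_3$-invariant function carrying this divisor forces the comparison character $G_3 \to k^{\times}$ to be trivial, so $\phi_3$ is $G_3$-invariant and hence the quotient map by $G_3$. The polar divisors of $\phi_1$ and $\phi_2$ differ by $P_2 - P_1$ (again by (c)), so a general member of the net $\langle \phi_2,\phi_1,1\rangle$ has polar divisor of degree $n+1$; since each projection has degree $n$, the relation $m\mu = (n+1)-n = 1$ for the mapping degree $m$ and a center multiplicity $\mu$ forces $m=\mu=1$, so $\varphi$ is birational onto a degree-$(n+1)$ curve with smooth centers. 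Condition (b) is the companion statement that no nontrivial $\sigma \in G_1\cap G_2$ collapses $\varphi$, since such $\sigma$ would fix both $\phi_1$ and $\phi_2$ and give $\varphi\circ\sigma = \varphi$. Finally the zero/pole pattern forced by (d) shows $\varphi(P_1)=(1:0:0)$, $\varphi(P_2)=(0:1:0)$, $\varphi(P_3)=(0:0:1)$.

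For the converse I first move the three inner Galois points to the coordinate vertices by a projective change of coordinates and write $\varphi=(f_0:f_1:f_2)$. Each projection $\pi_{\varphi(P_i)}$ is Galois with group $G_i$, giving (a); and because the center of an inner Galois point is a total ramification point of its projection, $P_i$ is fixed by $G_i$. Condition (c) drops out by pulling back the line $\overline{\varphi(P_i)\varphi(P_j)}$ twice: from $\varphi(P_i)$ it is $P_i + \sum_{\sigma \in G_i}\sigma(P_j)$ (base point plus the $G_i$-fiber through $P_j$), and from $\varphi(P_j)$ it is $P_j + \sum_{\tau \in G_j}\tau(P_i)$. Non-collinearity means $\varphi(P_k)$ misses $\overline{\varphi(P_i)\varphi(P_j)}$, i.e. $P_k$ lies in a different $G_i$-fiber from $P_j$, which is exactly $G_iP_j \neq G_iP_k$, yielding (d). For (b), each $\sigma \in G_i$ acts as a projective transformation fixing every line through $\varphi(P_i)$, so a common element of $G_i\cap G_j$ would fix every line through two distinct points and must be the identity.

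I expect the crux to be the $G_3$-invariance of $\phi_2/\phi_1$ in the construction. A priori $\phi_1$ is only $G_1$-invariant and $\phi_2$ only $G_2$-invariant, and there is no formal reason their ratio should be invariant under a third, unrelated group. It is exactly the divisor identity packaged in condition (c) -- together with the elementary fact that on a curve with $X/G_3 \cong \mathbb{P}^1$ a function with $G_3$-invariant divisor differs from a $G_3$-invariant function by a scalar -- that produces the hidden compatibility allowing a single plane model to carry all three Galois structures at once.
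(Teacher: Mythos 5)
Your ``conditions imply embedding'' direction is, at its core, the paper's own argument: you choose generators $\phi_1$, $\phi_2$ of $k(X)^{G_1}$, $k(X)^{G_2}$ with divisors equal to differences of orbit divisors, map by $(\phi_2:\phi_1:1)$, and use (c) to collapse $\mathrm{div}(\phi_2/\phi_1)$ to $\sum_{\gamma \in G_3}\gamma(P_2)-\sum_{\gamma \in G_3}\gamma(P_1)$, then compare with a generator of $k(X)^{G_3}$ carrying the same divisor; this is exactly the paper's key computation, and you correctly single it out as the crux. Where you deviate is birationality: the paper gets it from condition (b) via the two-point criterion (Fact~\ref{criterion}; in substance $k(\phi_1,\phi_2)=k(X)^{G_1\cap G_2}=k(X)$), whereas you count degrees. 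Your count is fine in spirit, and has the interesting feature of not using (b) at all, but as written it equates $\deg D=n+1$ with $\deg\varphi\cdot\deg\varphi(X)$, which presupposes that the net $\langle \phi_2,\phi_1,1\rangle \subset \mathcal{L}(D)$ is base-point-free. Either check this directly from (d) (the three divisors $P_1+\sum_{\sigma \in G_1}\sigma(P_2)$, $P_1+\sum_{\sigma \in G_1}\sigma(P_3)$, $P_2+\sum_{\tau \in G_2}\tau(P_3)$ cutting out the coordinate lines have no common point), or note that with a fixed part $F$ your two equations give $m\mu=1-\deg F$, so $m\ge 1$ and $\mu\ge 1$ force $\deg F=0$ anyway.

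The genuine error is in the converse direction, in your proof of (b). You assert that each $\sigma \in G_i$ ``acts as a projective transformation fixing every line through $\varphi(P_i)$,'' so that an element of $G_i\cap G_j$ fixes all lines through two distinct points and is the identity. For a birational embedding the image curve is in general singular, and elements of $G_{P_i}$ are automorphisms of the smooth model $X$ that need not extend to elements of ${\rm PGL}(3,k)$: $\sigma$ permutes the points of each pulled-back divisor $\varphi^*\ell$ with $\ell \ni \varphi(P_i)$, but that is a statement about divisors on $X$, not a linear action on $\mathbb{P}^2$. Indeed, the point of the paper's Theorem~\ref{Fermat} is precisely that linearity of $G_{P_3}$ must be earned from an extra orbit hypothesis (the invariance $\gamma^*\varphi^*\overline{P_1P_2}=\varphi^*\overline{P_1P_2}$ is what produces the character $a(\gamma)$ there); it is not automatic. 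The correct derivation of (b) --- which is what Fact~\ref{criterion} encapsulates, and what the paper invokes --- is field-theoretic: $\sigma \in G_{\varphi(P_i)}\cap G_{\varphi(P_j)}$ fixes pointwise the two subfields $\pi_{\varphi(P_i)}^*k(\mathbb{P}^1)$ and $\pi_{\varphi(P_j)}^*k(\mathbb{P}^1)$, and since $\varphi$ is birational these two subfields together generate $k(X)$ (the two projections from distinct points recover the plane coordinates), so $\sigma=1$. With that replacement, and the base-point issue above addressed, your proof is complete and in substance coincides with the paper's.
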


\begin{theorem} \label{main-outer}
Let $G_1$, $G_2$ and $G_3 \subset {\rm Aut}(X)$ be finite subgroups, and let $Q_1$, $Q_2$ and $Q_3$ be different points of $X$. 
Then, four conditions
\begin{itemize}
\item[(a)] $X/{G_i} \cong \Bbb P^1$ for $i=1, 2, 3$,    
\item[(b)] $G_i \cap G_j=\{1\}$ for any $i, j$ with $i \ne j$,  
\item[(c')] $\sum_{\sigma \in G_i}\sigma(Q_k)=\sum_{\tau \in G_j} \tau(Q_k)$ for any $i, j, k$ with $\{i, j, k\}=\{1, 2, 3\}$, and 
\item[(d')] $G_iQ_j \ne G_iQ_k$ for any $i, j, k$ with $\{i, j, k\}=\{1, 2, 3\}$
\end{itemize}
are satisfied, if and only if there exists a birational embedding $\varphi: X \rightarrow \mathbb P^2$ of degree $|G_1|$ and non-collinear outer Galois points $P_1, P_2$ and $P_3$ exist for $\varphi(X)$ such that $G_{P_i}=G_i$ and $\overline{P_iP_j} \ni \varphi(Q_k)$ for any $i, j, k$ with $\{i, j, k\}=\{1, 2, 3\}$, where $\overline{P_iP_j}$ is the line passing through $P_i$ and $P_j$.  
\end{theorem}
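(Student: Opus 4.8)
The plan is to reconstruct $\varphi$ from its three projections, placing $P_1,P_2,P_3$ at the coordinate vertices $(1:0:0)$, $(0:1:0)$, $(0:0:1)$. Writing $\varphi=(f_0:f_1:f_2)$ and setting $x=f_0/f_2$, $y=f_1/f_2$, the three projections are $\pi_1=y$, $\pi_2=x$ and $\pi_3=x/y$, so the whole problem reduces to finding a generator $x$ of $k(X)^{G_2}$ and a generator $y$ of $k(X)^{G_1}$ whose ratio $x/y$ generates $k(X)^{G_3}$. Each $P_i$ will then be \emph{outer}, because $\pi_i$ has degree $|G_i|$, which equals $\deg\varphi(X)$ (an inner centre would lower the degree by one). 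The dictionary controlling the collinearities, just as in Theorem \ref{main}, is that $\varphi(Q_k)\in\overline{P_iP_j}$ holds exactly when the line through $P_i$ and the line through $P_j$ meeting $\varphi(Q_k)$ coincide, i.e. when the $\pi_i$-fibre and the $\pi_j$-fibre through $Q_k$ agree as divisors; this is precisely condition (c').

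For sufficiency I would argue as follows. By (a) each quotient is a line, so $k(X)^{G_i}=k(u_i)$ for a generator $u_i$. Condition (c') makes the orbit sum $R_k:=\sum_{\sigma\in G_i}\sigma(Q_k)$ independent of which of the two available groups is used; comparing degrees already forces $|G_1|=|G_2|=|G_3|=:d$. Putting $a_2=u_1(Q_2)$, $a_3=u_1(Q_3)$, $b_1=u_2(Q_1)$, $b_3=u_2(Q_3)$, I would then \emph{define}
\begin{equation}
 y=\frac{u_1-a_2}{u_1-a_3},\qquad x=\frac{u_2-b_1}{u_2-b_3},
\end{equation}
so that $(y)=R_2-R_3$ and $(x)=R_1-R_3$, whence $(x/y)=R_1-R_2$. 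Here (c') is decisive: it says $R_1$ and $R_2$ are also $G_3$-orbit sums, so $R_1-R_2$ is $G_3$-invariant and equals the $\pi_3$-pull-back of $(c_1)-(c_2)$ with $c_1=u_3(Q_1)$, $c_2=u_3(Q_2)$; therefore $x/y$ coincides up to a constant with $(u_3-c_1)/(u_3-c_2)$ and lies in $k(X)^{G_3}$. Condition (d') asserts $a_2\ne a_3$, $b_1\ne b_3$, $c_1\ne c_2$, so these three M\"obius functions are non-constant and $x$, $y$, $x/y$ really generate $k(X)^{G_2}$, $k(X)^{G_1}$, $k(X)^{G_3}$. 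Condition (b) gives birationality: by Galois theory $k(x,y)=k(X)^{G_1}\cdot k(X)^{G_2}$ is the fixed field of $G_1\cap G_2=\{1\}$, hence all of $k(X)$. Finally $x(Q_1)=0$ and $y(Q_2)=0$ place $\varphi(Q_1)$, $\varphi(Q_2)$ on $\overline{P_2P_3}=\{x=0\}$ and $\overline{P_1P_3}=\{y=0\}$, while $Q_3$ is a common pole of $x$ and $y$, putting $\varphi(Q_3)$ on the line at infinity $\overline{P_1P_2}$.

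For necessity I would read the four conditions off a given $\varphi$: each outer Galois point gives $X/G_i\cong\mathbb{P}^1$, yielding (a); two projections from distinct centres generate $k(\varphi(X))=k(X)$, so $k(X)=k(X)^{G_i}\cdot k(X)^{G_j}$ forces $G_i\cap G_j=\{1\}$, giving (b); the collinearity $\varphi(Q_k)\in\overline{P_iP_j}$ makes the two fibres through $Q_k$ coincide, giving (c'); and non-collinearity of $P_1,P_2,P_3$ means $\overline{P_iP_k}$ and $\overline{P_iP_j}$ are distinct lines through $P_i$, so $\pi_i(Q_j)\ne\pi_i(Q_k)$, giving (d').

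The hard part will be verifying that $x/y$ truly belongs to $k(X)^{G_3}$ and generates it, rather than merely having a $G_3$-invariant divisor: a function with $G_3$-invariant divisor is $G_3$-invariant only up to a character of $G_3$. The device of \emph{defining} $x$ and $y$ as M\"obius functions of the invariant generators $u_1,u_2$ is exactly what kills this character and, via (c'), identifies $x/y$ with a function pulled back from $X/G_3$. Pinning down this descent — and confirming that (c') is the precise input making $R_1-R_2$ come from the quotient by $G_3$ — is the crux; the remaining bookkeeping runs parallel to the two-point criterion of Fact \ref{criterion}.
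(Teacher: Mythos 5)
Your proposal is correct and follows essentially the same route as the paper: the paper proves Theorem \ref{main-outer} by adapting the proof of Theorem \ref{main}, i.e., choosing generators of $k(X)^{G_1}$ and $k(X)^{G_2}$ with divisors equal to differences of orbit sums (your M\"obius functions $x$, $y$ accomplish exactly this), mapping by $(x:y:1)$, and identifying $x/y$ with a generator of $k(X)^{G_3}$ by comparing divisors via condition (c'). Your worry about the ``character'' issue is already resolved by your own argument—two functions with the same divisor differ by a nonzero constant, so $x/y$ lies in $k(X)^{G_3}$—which is precisely how the paper handles the third Galois point.
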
 

As an application, a new example of a plane curve with non-collinear outer Galois points is constructed as follows. 

\begin{theorem} \label{hermitian} 
Let $p>0$, $q$ be a power of $p$, and let $X \subset \mathbb{P}^2$ be the Hermitian curve, which is (the projective closure of) the curve given by 
$$ x^q+x=y^{q+1}. $$
If a positive integer $s$ divides $q-1$, then there exists a plane model of $X$ of degree $s(q+1)$ admitting non-collinear outer Galois points $P_1, P_2$ and $P_3$. 
\end{theorem}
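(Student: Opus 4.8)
The plan is to deduce the theorem from Theorem~\ref{main-outer} by constructing, on the smooth Hermitian curve $X$, three subgroups $G_1,G_2,G_3\subset\mathrm{Aut}(X)$ of order $s(q+1)$ and three points $Q_1,Q_2,Q_3$ satisfying conditions (a), (b), (c') and (d'). I would use the classical identification $\mathrm{Aut}(X)\cong\mathrm{PGU}(3,q)$ and the $2$-transitivity of $\mathrm{PGU}(3,q)$ on the $q^3+1$ points of $X(\mathbb{F}_{q^2})$. First I fix three points $A_1,A_2,A_3\in X(\mathbb{F}_{q^2})$ in sufficiently general position, set $Q_i:=A_i$, and let $C_i$ be the subgroup of $\mathrm{PGU}(3,q)$ fixing the two points $A_j,A_k$ with $\{i,j,k\}=\{1,2,3\}$. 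By $2$-transitivity the stabilizer of an ordered pair of points of $X$ has order $|\mathrm{PGU}(3,q)|/((q^3+1)q^3)=q^2-1$, and after moving $(A_j,A_k)$ to $([0:0:1],[1:0:0])$ it becomes, in the model $x^q+x=y^{q+1}$, the diagonal torus $\{(x,y)\mapsto(\nu^{q+1}x,\nu y):\nu\in\mathbb{F}_{q^2}^{\times}\}$, whose third fixed point is $R_i:=[0:1:0]$. Thus $C_i$ is cyclic of order $q^2-1$, and since $s\mid q-1$ forces $s(q+1)\mid q^2-1$, I take $G_i\subset C_i$ to be the unique subgroup of order $s(q+1)$.

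With these choices conditions (c') and (d') are immediate from the fixed-point structure: every element of $G_i\subset C_i$ fixes $A_j$ and $A_k$, so for $\{i,j,k\}=\{1,2,3\}$ the orbits $G_iQ_k$ and $G_jQ_k$ both equal $\{A_k\}$, whence $\sum_{\sigma\in G_i}\sigma(Q_k)=s(q+1)A_k=\sum_{\tau\in G_j}\tau(Q_k)$, which is (c'); and $G_iQ_j=\{A_j\}\ne\{A_k\}=G_iQ_k$ gives (d'). For (b) I would argue by general position: any $\gamma\in C_i\cap C_j$ fixes $A_1,A_2,A_3$ together with the two points $R_i,R_j$, hence fixes at least four points in general position and is the identity of $\mathrm{PGL}(3,k)$; so $G_i\cap G_j\subset C_i\cap C_j=\{1\}$. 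This is why the points $A_1,A_2,A_3$ must be chosen so that no three of $A_1,A_2,A_3,R_i$ are collinear.

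The substantive step is condition (a), namely $X/G_i\cong\mathbb{P}^1$. In the coordinates above, $G_i$ is the order $s(q+1)$ subgroup of $\{(x,y)\mapsto(\nu^{q+1}x,\nu y)\}$; since $p\nmid s$ and $q+1\equiv1\pmod p$, its order is prime to $p$ and the quotient is tamely ramified. I would record the ramification of $X\to X/G_i$: the point $(0,0)$ and the point at infinity are totally ramified (index $s(q+1)$), while each of the $q-1$ points of $\{y=0\}\cap X$ other than $(0,0)$ has stabilizer $\mu_{q+1}$ (index $q+1$), and there is no other ramification. Substituting $2g_X-2=q^2-q-2$ and these indices into the Riemann--Hurwitz formula gives genus $0$ for $X/G_i$, establishing (a). Having verified (a)--(d'), Theorem~\ref{main-outer} then yields a birational embedding of degree $|G_1|=s(q+1)$ with non-collinear outer Galois points $P_1,P_2,P_3$ and $G_{P_i}=G_i$, which is the required plane model. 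I expect the genuine obstacle to be precisely the bookkeeping in (a)---pinning the torus $C_i$ down concretely enough to list all its fixed points and ramification indices and to confirm tameness---after which (b), (c') and (d') follow cheaply from the fixed-point structure of the $C_i$.
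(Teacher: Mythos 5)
Your proposal is correct, and its core is the same as the paper's: both take the three Galois groups to be the order\nobreakdash-$s(q+1)$ subgroups of the cyclic tori of order $q^2-1$ that fix two $\mathbb{F}_{q^2}$-rational points of the Hermitian curve, check conditions (a), (b), (c'), (d'), and invoke Theorem~\ref{main-outer}; your verifications of (c') and (d') (all relevant orbits are singletons) are word-for-word the paper's. The supporting steps, however, differ genuinely, and it is worth recording what each buys. You obtain all three tori uniformly from $2$-transitivity of $\mathrm{PGU}(3,q)$ and orbit--stabilizer, whereas the paper writes down only the diagonal group fixing $Q_1=(1:0:0)$ and $Q_2=(0:0:1)$ and manufactures the other two by conjugating with an automorphism $\Phi$, taken from the Sylow $p$-subgroup fixing $Q_1$, that interchanges $Q_2$ and $Q_3$; your route avoids having to produce that swapping element. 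For (a), the paper is much shorter than your Riemann--Hurwitz computation: since $\mu_{q+1}\subset G_i$ acts by $(x,y)\mapsto(x,\nu y)$, one has $k(X)^{G_i}\subset k(X)^{\mu_{q+1}}=k(x)$, and L\"uroth's theorem gives rationality at once, with no ramification bookkeeping. Your computation ($2g_X-2=q^2-q-2$; two points with $e=s(q+1)$, $q-1$ points with $e=q+1$, tame because $p\nmid s(q+1)$) is correct and does yield genus $0$, but, as you anticipated, it is the heaviest part of your argument, and the paper shows it can be bypassed entirely. For (b), the paper's fixed-point argument is also leaner: $G_1$ fixes $Q_2$ while no nontrivial element of $G_2$ fixes $Q_2$, hence $G_1\cap G_2=\{1\}$.

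The one loose end in your write-up is the phrase ``sufficiently general position'': you require that no three of $A_1,A_2,A_3,R_i$ be collinear but never show such a choice exists. In fact any non-collinear triple of rational points works automatically, because the tangent line to the Hermitian curve at a rational point meets the curve only at that point: $R_i\in T_{A_j}\cap T_{A_k}$, each line $\overline{A_uA_v}$ meets $T_{A_j}$ only in $A_j$, and $R_i\ne A_j$, so $R_i$ lies on none of the three lines. Alternatively you can drop the four-fixed-points argument altogether: a nontrivial element of the torus $C_i$ fixes only $A_j$, $A_k$, $R_i$, or (if it lies in $\mu_{q+1}$) the line $\overline{A_jA_k}$ pointwise together with $R_i$; since $A_i$ lies on the curve, off $\overline{A_jA_k}$, and is distinct from $R_i$, no nontrivial element of $C_i$ fixes $A_i$, and any element of $C_i\cap C_j$ fixes $A_i$, forcing $C_i\cap C_j=\{1\}$. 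Either patch takes two lines, so this gap is minor and does not affect the soundness of your approach.
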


A next task is to classify plane curves with non-collinear Galois points. 
We consider the group $G :=\langle G_{P_1}, G_{P_2}, G_{P_3} \rangle \subset {\rm Aut}(X)$ for non-collinear outer Galois points $P_1, P_2$ and $P_3$.  
The case where the orbit $GQ$ of $Q$ is included in $\varphi^{-1}(\bigcup_{i \ne j}\overline{P_iP_j})$ for any $Q \in \varphi^{-1}(\bigcup_{i \ne j}\overline{P_iP_j})$ is determined as follows. 

\begin{theorem} \label{Fermat}
Let $\varphi: X \rightarrow \mathbb{P}^2$ be a birational embedding of degree $d \ge 3$ and let $C=\varphi(X)$. 
Then, the following conditions are equivalent. 
\begin{itemize}
\item[(a)] There exist non-collinear Galois points $P_1, P_2$ and $P_3 \in \mathbb{P}^2 \setminus C$ such that $GQ \subset \varphi^{-1}(\bigcup_{i \ne j}\overline{P_iP_j})$ for any $Q \in \varphi^{-1}(\bigcup_{i \ne j}\overline{P_iP_j})$, where $G=\langle G_{P_1}, G_{P_2}, G_{P_3} \rangle$.  
\item[(b)] $p=0$ or $d$ is prime to $p$, and $C$ is projectively equivalent to the Fermat curve $X^d+Y^d+Z^d=0$. 
\end{itemize}    
\end{theorem}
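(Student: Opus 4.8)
The plan is to prove the two implications separately; (b) $\Rightarrow$ (a) is a direct verification, while (a) $\Rightarrow$ (b) carries the content. For (b) $\Rightarrow$ (a), taking $C:\,X^{d}+Y^{d}+Z^{d}=0$ with $p\nmid d$, I would check that the coordinate vertices $P_{1}=[1:0:0]$, $P_{2}=[0:1:0]$, $P_{3}=[0:0:1]$ are outer Galois points: each lies off $C$, and the projection from $P_{3}$, namely $[X:Y:Z]\mapsto[X:Y]$, has generic fibre cut out by $Z^{d}=-(X^{d}+Y^{d})$, a Kummer extension (here $p\nmid d$) with $G_{P_{3}}=\langle[X:Y:Z]\mapsto[X:Y:\zeta Z]\rangle$, $\zeta$ a primitive $d$-th root of unity. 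The three vertices are non-collinear, the lines $\overline{P_{i}P_{j}}$ are the coordinate lines $\{XYZ=0\}$, and $G=\langle G_{P_{1}},G_{P_{2}},G_{P_{3}}\rangle$ is the $d$-torsion of the diagonal torus of $\mathrm{PGL}_{3}$, which preserves each coordinate line; hence $\varphi^{-1}(\bigcup_{i\ne j}\overline{P_{i}P_{j}})=C\cap\{XYZ=0\}$ is $G$-invariant, giving (a).

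For (a) $\Rightarrow$ (b) I would first normalise, by a projective change of coordinates, so that $P_{1},P_{2},P_{3}$ are the coordinate vertices; then $\overline{P_{1}P_{2}}=\{Z=0\}$, $\overline{P_{1}P_{3}}=\{Y=0\}$, $\overline{P_{2}P_{3}}=\{X=0\}$, and I set $D_{X}=\varphi^{-1}(\{X=0\})$, and similarly $D_{Y},D_{Z}$. These are pairwise disjoint, since their pairwise intersections in $\mathbb{P}^{2}$ are the vertices, which lie off $C$. Now $G_{P_{3}}$ preserves every line through $P_{3}$, in particular the two coordinate lines $\{X=0\},\{Y=0\}$, so it preserves $D_{X}$ and $D_{Y}$; the orbit hypothesis makes $G_{P_{3}}$ preserve $D_{X}\sqcup D_{Y}\sqcup D_{Z}$, hence it preserves $D_{Z}$; and since $\{Z=0\}$ is transversal to the pencil through $P_{3}$ (meeting each fibre once), $G_{P_{3}}$ must fix $D_{Z}$ \emph{pointwise}. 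Symmetrically $G_{P_{i}}$ fixes the opposite divisor pointwise, so the points of $D_{Z}$ (resp.\ $D_{X}$, $D_{Y}$) are totally ramified for $\pi_{P_{3}}$ (resp.\ $\pi_{P_{1}}$, $\pi_{P_{2}}$). I would also record that $G_{P_{i}}\cap G_{P_{j}}=\{1\}$: a common element preserves the two distinct pencils through $P_{i}$ and $P_{j}$, hence fixes every point of $C$ off $\overline{P_{i}P_{j}}$, so is the identity.

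The heart is to extract the Fermat equation. In the chart $Z\ne0$ with $x=X/Z,\,y=Y/Z$, write $C:\,f(x,y)=0$ with $f=\sum_{j=0}^{d}a_{j}(x)y^{j}$, $\deg a_{j}\le d-j$. Each point of $D_{Y}$ is totally ramified for $\pi_{P_{2}}$, so its fibre is a single point, giving $f(c_{i},y)=a_{d}(c_{i})\,y^{d}$ at each such $x=c_{i}$, whence $a_{j}(c_{i})=0$ for $0\le j\le d-1$. The crucial point is that there are \emph{$d$ distinct} such $c_{i}$, i.e.\ $|D_{Y}|=d$: as $D_{Y}$ is a fibre of $\pi_{P_{1}}$ on which $G_{P_{1}}$ acts transitively, were some $1\ne\sigma\in G_{P_{1}}$ to fix a point $R\in D_{Y}$, then $\sigma$ and $G_{P_{2}}$ would both fix the smooth point $R$; the stabiliser acts faithfully on the tangent line at $R$, hence is cyclic, forcing $\langle\sigma\rangle\subseteq G_{P_{2}}$ and so $\sigma\in G_{P_{1}}\cap G_{P_{2}}=\{1\}$, a contradiction. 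Thus $\pi_{P_{1}}$ is unramified over that fibre and $|D_{Y}|=d$. Then the degree bound $\deg a_{j}\le d-j<d$ kills $a_{j}$ for $1\le j\le d-1$, so $f=\alpha y^{d}+a_{0}(x)$ with $\deg a_{0}=d$; the same argument through $G_{P_{1}}$ and $D_{X}$ gives $f=\beta x^{d}+b_{0}(y)$, and matching the two yields $f=\beta x^{d}+\alpha y^{d}+\gamma$, i.e.\ $C$ is projectively the Fermat curve. Feeding the same faithfulness input into Riemann--Hurwitz (total, tame ramification along $D_{i}$ gives $\deg\ge d(d-1)$, while $g(X)\le\binom{d-1}{2}$ forces equality) shows in addition that there is no further ramification and $C$ is smooth.

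It remains to pin down the characteristic. If $p\mid d$ then, $k$ being perfect, $\beta x^{d}+\alpha y^{d}+\gamma$ is a perfect $p$-th power, so $C$ would be non-reduced, contradicting that $C=\varphi(X)$ is a reduced irreducible curve; hence $p=0$ or $p\nmid d$, completing (b). \textbf{The main obstacle} is exactly the faithfulness-on-the-tangent step used to force $|D_{i}|=d$: it is automatic in characteristic $0$ and in the tame range $p\nmid d$, where the stabiliser of a smooth point is cyclic, but in the wild range $p\mid d$ a nontrivial $p$-element of $G_{P_{i}}$ acts trivially on the tangent line and the argument collapses. The genuine work is therefore to \emph{exclude $p\mid d$ at the outset}; I would attempt this through a global ramification count, comparing the three Riemann--Hurwitz formulae for $\pi_{P_{1}},\pi_{P_{2}},\pi_{P_{3}}$ — whose ramification along $D_{1},D_{2},D_{3}$ would then be wild, with different exponents $\ge d$ at each totally ramified point — against the bound $g(X)\le\binom{d-1}{2}$, so as to rule out the wild configuration, after which the tame analysis above applies verbatim.
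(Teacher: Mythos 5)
Your direction (b) $\Rightarrow$ (a) and your tame-case analysis of (a) $\Rightarrow$ (b) are essentially workable, but the proposal has a genuine gap, and you have located it yourself: the wild case $p \mid d$. Note that excluding this case is not a harmless preliminary reduction; it is part of the content of the theorem, since condition (a) must itself be shown to force $d$ prime to $p$. A proof whose main mechanism "applies verbatim once $p\mid d$ is excluded" has therefore deferred the hardest point. Moreover, the repair you sketch via Riemann--Hurwitz does not close it. In the wild case you no longer know that $|D_X|=|D_Y|=|D_Z|=d$; writing $n_X=|D_X|$ etc., all you know is that each point of $D_Y$ is totally (wildly) ramified for $\pi_{P_2}$, while $D_X$ and $D_Z$ are single fibres of $\pi_{P_2}$. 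Riemann--Hurwitz for $\pi_{P_2}\circ\varphi$ then gives only
$$ 2g-2+2d \ \ge\ n_Y\,d + (d-n_X) + (d-n_Z), $$
and combined with $g \le (d-1)(d-2)/2$ this reads $n_Y d \le d^2-3d+n_X+n_Z$, which is comfortably satisfiable when the $n_i$ are small (for instance $n_X=n_Y=n_Z=1$ and $d\ge 4$); summing the analogous inequalities for the three projections does not help either. So the ramification count cannot by itself rule out the wild configuration; a different idea is needed. (A smaller imprecision: "$G_{P_3}$ fixes $D_Z$ pointwise" is not quite justified if $\varphi$ fails to be injective over a singular point of $C$ on a coordinate line; what your argument actually yields, and what you use, is that each line $\overline{P_2\varphi(R)}$, $R\in D_Y$, meets $C$ only at $\varphi(R)$.)

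The paper supplies exactly the missing idea, and it makes the characteristic issue evaporate without any case split. One first shows, as you essentially do, that $G_{P_3}$ permutes ${\rm supp}\,\varphi^*\overline{P_1P_2}$; since $G_{P_1}$ acts transitively on that support, the divisor $\varphi^*\overline{P_1P_2}$ has constant multiplicities and is therefore $G_{P_3}$-invariant \emph{as a divisor}. Taking $f$ with $k(f)=k(X)^{G_{P_3}}$ and $(f)=\varphi^*\overline{P_3P_2}-\varphi^*\overline{P_3P_1}$, and $g$ with $(g)=\varphi^*\overline{P_1P_2}-\varphi^*\overline{P_1P_3}$, the $G_{P_3}$-invariance of both divisors occurring in $(g)$ forces $\gamma^*g=a(\gamma)g$ with $a(\gamma)\in k\setminus\{0\}$. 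Since $\varphi$ is represented by $(f:1:g)$ and $f,1$ are $G_{P_3}$-invariant, $a(\gamma)=1$ would make $\gamma$ act trivially on $k(f,g)=k(X)$; hence $\gamma\mapsto a(\gamma)$ is an \emph{injective} homomorphism $G_{P_3}\hookrightarrow k\setminus\{0\}$. Because the multiplicative group of $k$ has no $p$-torsion and $|G_{P_3}|=d$, this yields in one stroke that $p\nmid d$, that $G_{P_3}$ is cyclic, and that $C$ is invariant under $(X:Y:Z)\mapsto(X:Y:\zeta Z)$; the same argument for $G_{P_1}$ gives invariance under $(X:Y:Z)\mapsto(\zeta X:Y:Z)$, and these two invariances force the equation $X^d+Y^d+Z^d=0$ directly, making your affine computation unnecessary. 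This eigenvalue-character argument (divisor invariance giving an embedding into the multiplicative group, rather than a ramification count) is the ingredient your proposal is missing.
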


\section{Proof of Theorems \ref{main} and \ref{main-outer}} 

\begin{proof}[Proof of Theorem \ref{main}]
We consider the if-part. 
According to Fact \ref{criterion}, conditions (a), (b) and (c) are satisfied.  
Since points $\varphi(P_1), \varphi(P_2)$ and $\varphi(P_3)$ are not collinear, condition (d) is satisfied. 

We consider the only-if part. 
By condition (d), 
$$ \sum_{\sigma \in G_1}\sigma(P_2) \ne \sum_{\sigma \in G_1}\sigma(P_3). $$
Then, by condition (a), there exists a function $f \in k(X) \setminus k$ such that 
$$ k(X)^{G_1}=k(f), \ (f)=\sum_{\sigma \in G_1}\sigma(P_3)-\sum_{\sigma \in G_1}\sigma(P_2) $$
(see also \cite[III.7.1, III.7.2, III.8.2]{stichtenoth}). 
Similarly, there exists $g \in k(X) \setminus k$ such that 
$$ k(X)^{G_2}=k(g), \ (g)=\sum_{\tau \in G_2}\tau(P_3)-\sum_{\tau \in G_2}\tau(P_1). $$
Considering condition (c), we take a divisor 
$$ D:=P_1+\sum_{\sigma \in G_1}\sigma(P_2)=P_2+\sum_{\tau \in G_2}\tau(P_1). $$
Then, $f, g \in \mathcal{L}(D)$ and the sublinear system of $|D|$ corresponding to a linear space $\langle f, g, 1\rangle$ is base-point-free. 
Using condition (b), the induced morphism 
$$ \varphi: X \rightarrow \mathbb{P}^2; \ (f:g:1)$$
is birational onto its image, and points $\varphi(P_1)=(0:1:0)$ and $\varphi(P_2)=(1:0:0)$ are inner Galois points for $\varphi(X)$ such that $G_{\varphi(P_1)}=G_1$ and $G_{\varphi(P_2)}=G_2$ (see \cite[Proofs of Proposition 1 and of Theorem 1]{fukasawa1}). 
Furthermore, $\varphi(P_3)=(0:0:1)$. 
Using condition (c),
\begin{eqnarray*}
(g/f)&=& \sum_{\tau \in G_2}\tau(P_3)-\sum_{\tau \in G_2}\tau(P_1)-\sum_{\sigma \in G_1}\sigma(P_3)+\sum_{\sigma \in G_1}\sigma(P_2) \\
&=& (P_2+\sum_{\tau \in G_2}\tau(P_3))-(P_2+\sum_{\tau \in G_2}\tau(P_1)) \\
& &-(P_1+\sum_{\sigma \in G_1}\sigma(P_3))+(P_1+\sum_{\sigma \in G_1}\sigma(P_2)) \\
&=&(P_3+\sum_{\gamma \in G_3}\gamma(P_2))-(P_3+\sum_{\gamma \in G_3}\gamma(P_1)) \\
&=&\sum_{\gamma \in G_3}\gamma(P_2)-\sum_{\gamma \in G_3}\gamma(P_1).  
\end{eqnarray*}
Then, the subfield $k(g/f)$ induced by the projection from $P_3$ coincides with $k(X)^{G_3}$. 
Therefore, this point $\varphi(P_3)$ is inner Galois with $G_{\varphi(P_3)}=G_3$. 
\end{proof}

The proof of Theorem \ref{main-outer} is very similar.

\section{A new example} 
Let $X \subset \mathbb{P}^2$ be the Hermitian curve of degree $q+1$. 
The set of all $\mathbb{F}_{q^2}$-rational points of $X$ is denoted by $X(\mathbb{F}_{q^2})$. 
See \cite{hkt} for properties of the Hermitian curve. 

\begin{proof}[Proof of Theorem \ref{hermitian}]
Let $Q_1=(1:0:0)$ and $Q_2=(0:0:1)$, and let $Q_3 \in X(\mathbb{F}_{q^2})$ with $Q_3 \not\in \overline{Q_1Q_2}=\{Y=0\}$.  
Then, the matrix 
$$ A_{a}:=\left(\begin{array}{ccc}
a^{q+1} & 0 & 0\\
0 & a & 0 \\
0 & 0 & 1 
\end{array}\right)$$
acts on $X$ and fixes $Q_1$ and $Q_2$, where $a \in \mathbb{F}_{q^2} \setminus \{0\}$. 
Let $s m=q-1$ and let $G_3 \subset {\rm Aut}(X)$ be the cyclic group of order $s(q+1)$ consisting of all $A_{a^m}$. 
Note that each element of $G_3 \setminus \{1\}$ does not fix $Q_3$. 
Considering the Sylow $p$-group of ${\rm Aut}(X)$ fixing $Q_1$, it follows that there exists an automorphism $\Phi \in {\rm Aut}(X)$ such that $\Phi(Q_1)=Q_1$, $\Phi(Q_2)=Q_3$ and $\Phi(Q_3)=Q_2$. 
Then, the group $\Phi G_3 \Phi^{-1}$ fixes points $Q_1$ and $Q_3$, and each element of this group different from identity does not fix $Q_2$. 
Therefore, for each pair $(Q_i, Q_j)$, there exists a cyclic group $G_k$ of order $s(q+1)$ such that $G_k$ fixes points $Q_i$ and $Q_j$ and each element of $G_k \setminus \{1\}$ does not fix $Q_k$. 
We would like to show that conditions (a), (b), (c') and (d') in Theorem \ref{main-outer} are satisfied for groups $G_1, G_2$ and $G_3$. 

Note that 
$$ (A_{a^m})^s=A_{a^{q-1}}=
\left(\begin{array}{ccc} 
1 & 0 & 0 \\
0 & a^{q-1} & 0 \\
0 & 0 & 1 
\end{array}\right). $$
Let $G_3' \subset G_3$ be a subgroup consisting of all $A_{a^{q-1}}$.
Since $k(X)^{G_3} \subset k(X)^{G_3'}=k(x)$, by L\"{u}roth's theorem, $X/G_3$ is rational. 
Condition (a) is satisfied. 
Since $G_1$ fixes $Q_2$ and the set $G_2 \setminus \{1\}$ does not contain an element fixing $Q_2$, $G_1 \cap G_2=\{1\}$. 
Condition (b) is satisfied. 
For any $i, j, k$ with $\{i, j, k\}=\{1, 2, 3\}$, 
$$ \sum_{\sigma \in G_i} \sigma(Q_k)=s(q+1)Q_k=\sum_{\tau \in G_j}\tau(Q_k). $$
Condition (c') is satisfied. 
Since $G_iQ_j=\{Q_j\} \ne \{Q_k\}=G_iQ_k$, condition (d') is satisfied. 
\end{proof}

\section{A characterization of the Fermat curve}

\begin{proof}[Proof of Theorem \ref{Fermat}] 
(a) $\Rightarrow$ (b). 
Let $Q \in \varphi^{-1}(\overline{P_1P_2})$. 
By the definition of outer Galois points, $G_{P_1} Q \subset \varphi^{-1}(\overline{P_1P_2})$, $G_{P_2} Q \subset \varphi^{-1}(\overline{P_1P_2})$ and $G_{P_3}Q \subset \varphi^{-1}(\overline{P_3\varphi(Q)})$. 
If $\gamma (Q) \in \varphi^{-1}(\overline{P_2P_3})$ for some $\gamma \in G_{P_3}$, then $\varphi(\gamma(Q)) \in \overline{P_3\varphi(Q)} \cap \overline{P_2P_3}=\{P_3\}$.
This is a contradiction. 
Therefore, condition (a) implies that $G_{P_3}Q \subset \varphi^{-1}(\overline{P_1P_2})$. 
It follows that $\gamma \in G_{P_3}$ induces a bijection of ${\rm supp}(\varphi^*\overline{P_1P_2})$. 
Since $G_{P_1}$ acts on ${\rm supp}(\varphi^*\overline{P_1P_2})$ transitively, 
$$\varphi^*\overline{P_1P_2}=\sum_{Q \in {\rm supp}(\varphi^*\overline{P_1P_2})}mQ$$ 
for some integer $m \ge 1$. 
Therefore, 
$$\gamma^*\varphi^*(\overline{P_1P_2})=\varphi^*(\overline{P_1P_2}).$$ 
Let $D:=\varphi^*\overline{P_3P_1}$. 
We take a function $f \in k(X)$ with $k(f)=k(X)^{G_3}$ such that 
$$(f)=\varphi^*\overline{P_3P_2}-\varphi^*\overline{P_3P_1}.  $$
Similarly, we can take a function $g \in k(X)^{G_1}$ such that 
$$(g)=\varphi^*\overline{P_1P_2}-\varphi^*\overline{P_1P_3}.$$ 
Since $\overline{P_1P_2}$ does not pass through $P_3$, $g \not \in \langle f, 1 \rangle \subset \mathcal{L}(D)$. 
It follows from the condition $\gamma^*\varphi^*(\overline{P_1P_2})=\varphi^*(\overline{P_1P_2})$ that $\gamma^*g=a(\gamma) g$ for some $a(\gamma) \in k$. 
Therefore, a linear subspace $\langle f, 1, g \rangle \subset \mathcal{L}(D)$ is invariant under the action of $\gamma \in G_{P_3}$.  
Since $\varphi$ is represented by $(f:1:g)$, there exists an injective homomorphism 
$$ G_{P_3} \hookrightarrow {\rm PGL}(3, k); \ \gamma \mapsto 
\left(\begin{array}{ccc}
1 & 0 & 0\\
0 & 1 & 0 \\
0 & 0 & a(\gamma)
\end{array}\right). $$
It follows that $d$ is prime to $p$, and the map $G_{P_3} \rightarrow k \setminus \{0\}$; $\gamma \mapsto a(\gamma)$ is an injective homomorphism. 
This implies that $G_{P_3}$ is a cyclic group, and $C$ is invariant under the linear transformation $(X:Y:Z) \mapsto (X:Y:\zeta Z)$, where $\zeta$ is a primitive $d$-th root of unity.  
Similarly, $G_{P_1}$ is generated by the automorphism given by the linear transformation $(X:Y:Z) \mapsto (\zeta X:Y:Z)$. 
It follows that $C$ is defined by $X^d+Y^d+Z^d=0$. 

(b) $\Rightarrow$ (a). 
This is derived from the fact that groups $G_{P_1}$, $G_{P_2}$ and $G_{P_3}$ fix all points on the lines $\{X=0\}$, $\{Y=0\}$ and $\{Z=0\}$ respectively for the Fermat curve, where $P_1=(1:0:0)$, $P_2=(0:1:0)$ and $P_3=(0:0:1)$. 
\end{proof}

\begin{center} {\bf Acknowledgements} \end{center} 
The author is grateful to Doctor Kazuki Higashine for helpful discussions.

\end{document}